\theoremstyle{plain}
\newtheorem{theorem}{Theorem}[section]
\newtheorem{lemma}[theorem]{Lemma}
\newtheorem{assumption}{Assumption}[section]
\theoremstyle{remark}
\newtheorem{algo}{Algorithm}[section]
\newtheorem{remark}{Remark}[section]
\newcommand{\vecc}{\mathsf{vec}}
\newcommand{\Var}{\mathsf{Var}}
\newcommand{\E}{\mathsf{E} }
\newcommand{\RR}{\mathbb{R}}
\newcommand{\rank}{\mathsf{rank}}
\global\long\def\Scal{\mathcal{S}}%
\newcommand{\Cov}{\mathsf{Cov}}
\numberwithin{figure}{section}
 \numberwithin{equation}{section}
 \numberwithin{table}{section}
\begin{document}

\title{Inference for   Low-Rank Models} 

\author{Victor Chernozhukov}
\address{Department of Economics, MIT,  Cambridge, MA 02139}
\email{vchern@mit.edu}

\author{Christian Hansen}
\address{Booth School of Business, University of Chicago, Chicago, IL 60637}
\email{Christian.Hansen@chicagobooth.edu}
 
\author{Yuan Liao}
\address{Department of Economics, Rutgers University, New Brunswick, NJ 08901}
\email{yuan.liao@rutgers.edu}

\author{Yinchu Zhu}
\address{Department of Economics, Brandeis University, 415 South St, Waltham, MA 02453 }
\email{yinchuzhu@brandeis.edu}

\date{This draft: \today}                                           

\begin{abstract}

This paper studies inference in linear models with a high-dimensional parameter matrix that can be well-approximated by a ``spiked low-rank matrix.'' A spiked low-rank matrix has rank that grows slowly compared to its dimensions and nonzero singular values that diverge to infinity. We show that this framework covers a broad class of models of latent-variables which can accommodate matrix completion problems, factor models, varying coefficient models, and heterogeneous treatment effects. For inference, we apply a procedure that relies on an initial nuclear-norm penalized estimation step followed by two ordinary least squares regressions. We consider the framework of estimating incoherent eigenvectors and use a rotation argument to argue that  the eigenspace  estimation is asymptotically unbiased. Using this framework we  show that our procedure provides asymptotically normal inference and achieves the semiparametric efficiency bound. We illustrate our framework by providing low-level conditions for its application in a treatment effects context where treatment assignment might be strongly dependent. 
\end{abstract}

\maketitle

\textbf{Key words:}  nuclear norm penalization, singular value thresholding, sample splitting
 
 
\onehalfspacing

 \newpage

 \section{Introduction} 
 
 We study  inference for linear low-rank models:
 $$
 Y= X \circ \Theta + \mathcal E
 $$
 where $(Y,X,\Theta,\mathcal E)$ are $n\times p$ matrices with both $n,p\to\infty$ and $\circ$ denotes the matrix element-wise product. We observe data $(X,Y)$, and $\mathcal E$ represents unobserved statistical noise. The model parameter is the matrix coefficient $\Theta.$ We assume $\Theta$ follows an \textit{approximate spiked low rank model}: $\Theta$ can be well-approximated by a low-rank matrix whose rank $J$ is either fixed or grows slowly compared to $n,p$ and whose largest $J$ singular values diverge with $(n,p)$. Our main goal is performing statistical inference on both sparse and dense linear combinations of elements of $\Theta$.
 
 Under the approximate spiked low rank model structure, nuclear-norm regularization provides a natural benchmark approach to estimating $\Theta$. There is a substantial literature that studies rates of convergence of nuclear-norm penalized estimators; see, e.g., \cite{koltchinskii2011nuclear} and \cite{negahban2011estimation} for prominent examples. 
 Providing results in low-rank models that allow for ready construction of inferential objects such as confidence intervals has been a topic in the more recent literature. For example, \cite{xia2019statistical} and \cite{chen2019inference} study inference in settings where the matrix parameter of interest has an exact low-rank structure with fixed rank and elements of $X$ are i.i.d. copies from an unknown distribution.
 
 We contribute to this literature by establishing asymptotic normality for low-rank estimators. Our method starts with an initial estimator of $\Theta$ obtained using nuclear-norm regularization from which we extract the right singular vectors. 
 We then treat the extracted singular vectors as data and obtain estimates of the left singular vectors and updated estimates of the right singular vectors by applying additional least squares steps. The final estimator is then the product of the estimated left and right singular vectors. We make use of a rotation argument to show that, in terms of estimating the space of the singular vectors, the regularization bias of the first step nuclear-norm penalized estimation aligns with the space spanned by the true singular vectors. Thus, the regularization bias is effectively eliminated by the additional least squares steps. 
 
 We prove that our estimator for linear functionals of the low-rank matrix is asymptotically normal. We also establish the semiparametric efficiency bound and show that our estimator attains the efficiency bound.   The notion of semiparametric efficiency in the presence of high-dimensional nuisance parameters is adopted from \cite{jankova2018semiparametric}. 
 Our result is novel relative to \cite{jankova2018semiparametric} because they deal with sparse models while we look at linear combinations of a low-rank matrix. 
 
 Our conditions allow possible strong dependence within $X$, which is useful in many contexts. For example, in the matrix completion context, we can accommodate persistence in observed and missing entries rather than relying on independent missingness. In our treatment effects example, allowing strong dependence allows us to consider scenarios where units are first in the control state for a period of time and then enter the treated state and remain there until the end of the sample period.

 We rely on two key technical conditions in establishing asymptotic normality of our proposed estimator. We first assume $\Theta$ has \textit{spiked singular values} (SSV), which requires that the non-zero singular values are large. This condition ensures that the rank of $\Theta$ can be consistently estimated and that the singular vectors are estimated sufficiently well for use in Stage 2 of the procedure. In the inference context, to make \textit{entrywise} inference for the low-rank matrix, the SSV condition on the singular values seem necessary. 
 
 The second condition relates to \textit{incoherent singular vectors} as defined in, e.g., \cite{candes2009exact,candes2010matrix, keshavan2010matrix}; and \cite{chen2019noisy}. This condition requires that the signals on the singular vectors should be approximately evenly distributed across their entries. Under the incoherence condition, we use a ``rotation" argument to show that our approach provides asymptotically unbiased estimates of the eigenvector space. We note that incoherence does rule out the setting of ``sparse PCA,'' which needs a separate treatment and often requires explicit debiasing steps as in, e.g., \cite{jankova2021biased}.

 We note that the SSV and incoherence conditions are strong and are often absent in the literature when probability bounds are derived. However, probability bounds for the Frobenius risk in general cannot imply the asymptotic distribution of estimators. In particular, one of the objects of interest in this paper is to make inference on sparse linear combinations of rows (or columns) of $\Theta$, including elementwise inference. Recent developments for perturbation bounds of entrywise eigen-analysis require SSV to make \textit{entrywise} inference for a low-rank matrix; see, e.g., \cite{abbe2020entrywise}.  
 
 To further illustrate that both conditions seem necessary for good performance, we provide new minimax theory on convergence rates without them. These results verify that it is impossible to guarantee entrywise consistency without SSV or incoherence and show that,  for dense linear combinations, the optimal rates one can achieve without these conditions are potentially much worse than those available under them. Finally, as these minimax rates do show that inference for dense linear combinations may proceed without SSV or incoherence, we provide an alternative inference for dense functionals when these conditions are relaxed.

 \noindent \textbf{The literature}
 
 \medskip
 
 Low-rank regression has been extensively studied in the literature. Much of this work focuses on deriving sharp deviation bounds for low-rank estimators; see, for instance, \cite{recht2011simpler}; \cite{gross2011recovering}; \cite{rohde2011estimation}; \cite{koltchinskii2011nuclear}; \cite{dray2015principal}; \cite{zhu2019high}; \cite{candes2010matrix}; \cite{hastie2015matrix}; \cite{keshavan2010matrix}; and \cite{sun2012calibrated}. As with \cite{xia2019statistical} and \cite{chen2019inference}, our paper complements the literature  by providing asymptotic distributional results. A key difference between our work and \cite{xia2019statistical} and \cite{chen2019inference} is that we adopt a different approach that does not rely on explicit debiasing steps to achieve asymptotic normality. Rather, we rely on the fact that $\Theta$ is a ``product parameter'' obtained by multiplication of left and right singular vectors to verify that our procedure produces sufficiently regular estimators for asymptotic normality to hold without explicit debiasing.
 
 Our paper is related to \cite{chernozhukov2018inference} which considers inference in linear panel data models with multivariate coefficient matrices that admit factor structures.  There are several important differences between the two papers. Because \cite{chernozhukov2018inference} consider estimation of multiple matrix parameters, they employ a complicated orthogonalization step to deal with the fact that regularization bias in any of the matrix parameters spills over and impacts estimation of all other matrix parameters. \cite{chernozhukov2018inference} also rely on strong conditions on regressors while our conditions allow the regressor in our model to be strongly persistent. This generalization allows us to handle matrix completion problems with ``systematic missingness." We also provide several new optimality results to establish semiparametric efficiency and minimaxity. Finally, we explicitly allow for the low-rank structure to be an approximation by accounting for approximation errors and allowing the rank of the approximating low-rank structure to increase with the sample size. Admitting these characteristics broadens the applicability of the method. For example, many nonparametric models cannot be formulated as exact low-rank models with fixed rank but can be \textit{approximated} with low-rank models with slowly growing rank.
 
 Throughout the paper, we denote the maximum and minimum singular values of a matrix $A$ as $\psi_{\max}(A)$ and $\psi_{\min}(A)$. We use $\psi_j(A)$ to denote the $j^{\text{th}}$ largest singular value of $A$. We use $\|A\|_F$, $\|A\|$ and $\|A\|_{(n)}=\sum_{k=1}^{\min\{n,p\}}\psi_k(A)$ to respectively denote the matrix Frobenius norm, operator norm, and nuclear norm. We let $\|A\|_{\max}=\max_{ij}|(A)_{ij}|$ be the elementwise norm.  Let $\vecc(A)$ denote the vector that stacks the columns of $A$. For two stochastic sequences, we write $a_n\asymp b_n$ if $a_n=O_P(b_n)$ and $b_n=O_P(a_n)$, which means $a_n/b_n=O_P(1)$. Finally, $a\vee b$ means $\max(a,b)$.

 \section{Estimation Procedure}\label{sec:inference}

 \subsection{Spiked low-rank matrices}
 Consider the following model
 \begin{equation}\label{eq2.1}
 	y_{ij} = x_{ij}\theta_{ij}+\varepsilon_{ij},\quad i\leq n,\quad j\leq p,
 \end{equation}
 where we observe data $(y_{ij}, x_{ij})$ and $\varepsilon_{ij}$ is the noise term. Let $(Y, X, \Theta, \mathcal E)$ denote the $n\times p$ matrices of $(y_{ij}, x_{ij}, \theta_{ij}, \varepsilon_{ij})$. Then the matrix form of \eqref{eq2.1} is 
 $$
 Y= X \circ \Theta +\mathcal E
 $$
 where $\circ$ denotes the matrix element-wise product. The goal is to make inference about linear combinations of elements of $\Theta$. Throughout the paper, we impose that $\Theta$ and its associated singular values/vectors are random.
 Suppose  $\Theta$ can be decomposed as 
 \begin{equation}\label{eq2.2}
 	\Theta = \Theta_0 + R, 
 \end{equation}
 where $\Theta_0$ and $R$ are $n\times p$ matrices satisfying the following conditions:
 
 (i) $\Theta_0$ is a rank $J$ matrix where $J$ is either bounded or grows slowly compared to $(n,p)$. In addition, the nonzero singular values of $\Theta_0$ are ``spiked": 
 $$
 \psi_1(\Theta_0)\geq...\geq\psi_J(\Theta_0)\geq \psi_{np},\quad \psi_j(\Theta_0)=0 \ \forall \ j>J
 $$
 for some sequence $\psi_{np}\to\infty$. 
 
 (ii) $R$ is the low-rank approximation error whose entries $r_{ij}$ satisfy
 $$
 \max_{i, j} |r_{ij}| \leq O_P( r_{np})
 $$
 for some sequence $r_{np}\to 0$.

 (iii) Let $U_0=[u_1,...,u_n]'$ and $V_0=[v_1,...,v_p]' $ respectively denote the $n \times J$ and $p \times J$ matrices that collect the left singular vectors and right singular vectors of $\Theta_0$ corresponding to the nonzero singular values. We assume incoherent singular-vectors: 
 $$
 \max_{j\leq p}\|v_j\| =O_P(\sqrt{Jp^{-1}}),\quad \max_{i\leq n}\|u_i\| =O_P(\sqrt{Jn^{-1}}).
 $$

 Given the approximate low-rank structure of $\Theta$, a natural estimation strategy is nuclear-norm penalized optimization:
 \begin{equation}\label{eq3.1}
 	\widetilde\Theta=\arg\min_{\Theta\in\mathcal A} \|Y - X \circ \Theta \|_F^2+ \nu \|\Theta\|_{(n)}
 \end{equation}
 where $\mathcal A=\{\Theta: \|\Theta\|_{\max}\leq M\}$ and $\nu$ is a tuning parameter. Imposing the $\max$-norm constraint with a large constant $M>0$ helps stabilize the solution; see, e.g., \cite{klopp2014noisy}. Nuclear-norm penalized regression is natural as the solution is easy to compute. Another option would be to explicitly penalize the matrix rank; however, obtaining the solution to the rank penalized problem is in general difficult unless all elements of $X$ are equal to one. 
 Statistical properties of (\ref{eq3.1}), focusing on the minimax rate for $\|\widetilde\Theta-\Theta\|_F$, have been well-studied in the literature; see, e.g., \cite{koltchinskii2011nuclear} and \cite{negahban2011estimation}. It is also well-known that the singular values of $\widetilde\Theta$ suffer from shrinkage biases, and dealing suitably with this shrinkage bias is an important component of our inferential procedure.
 
 Finally, we assume that $J$, the rank of the low-rank component $\Theta_0$, is known for simplicity. For instance, in the treatment effect study where the parameter matrix is approximated by a low-rank structure via a sieve representation, the rank equals the sieve dimension which could be pre-specified. 
 In cases where rank is unknown, it can be consistently estimated. For example, one can apply the singular value thresholding method where the cut-off value for ``large singular values'' can be chosen to dominate the noise level; see, e.g., \cite{onatski2010determining} and \cite{fan2022estimating}.  
 
 \subsection{The Proposed Estimation Procedure}
 
 Let the singular value decomposition of $\Theta_0$ be
 $$
 \Theta_0= U_0 D_0 V_0':= \Gamma_0V_0',\quad \Gamma_0:= U_0D_0.
 $$
 Here, $D_0$ is the $J\times J$ diagonal matrix containing the nonzero singular values of $\Theta_0$, and $U_0$ and $V_0$ are respectively the $n\times J$ left singular vector matrix of $\Theta_0$ and $p\times J$ right singular vector matrix of $\Theta_0$ corresponding to the non-zero singular values. Let $\gamma_{0,i}'$ for $i = 1,...,n$ denote the rows of $\Gamma_0$, and let $v_{0,j}'$ for $j = 1,...,p$ denote the rows of $V_0$.
 
 \begin{algo}\label{alg3.1} Fix $i\leq n$. Estimate $\theta_{ij}$ ($j=1,...,p$) as follows:
 	\begin{description}
 		\item[Step 1] \textit{Sample splitting.} Randomly split the sample into $ \{1,...,n\}\backslash \{i\}=\mathcal I \cup \mathcal I^c $ disjointly, so that $|\mathcal I|_0= \lfloor (n-1)/2\rfloor$. Let $$\mathcal G_{\mathcal I} := (Y_{\mathcal I}, X_{\mathcal I}, \Theta_{\mathcal I} )$$ respectively denote the $|\mathcal I|_0\times p$ submatrices of $ \mathcal G := (Y, X, \Theta)$ for observations $i\in \mathcal I$. Estimate the low-rank matrix $\Theta_{\mathcal I}$ as
 		\begin{equation} 
 			\widetilde\Theta_{\mathcal I}=\arg\min_{\|\Theta_{\mathcal I}\|_{\max}<M} \|Y_{\mathcal I} - X_{\mathcal I}\circ\Theta_{\mathcal I}\|_F^2+ \nu \|\Theta_{\mathcal I}\|_{(n)}.
 		\end{equation}
 		We provide a specific feasible choice for $\nu$ when discussing the simulation example in Section \ref{sec:sim}.
 		Let $\widetilde V_{\mathcal I}=(\widetilde v_1,...,\widetilde v_p)'$ be the $p\times J$ matrix whose columns are the first $J$ eigenvectors of $ \widetilde\Theta_{\mathcal I}' \widetilde\Theta_{\mathcal I}$. 
 		\item[Step 2] \textit{Unbiased estimate of $\Gamma_0, V_0$.} Using data $\mathcal I^c$, obtain
 		$$
 		\widehat\gamma_{k, \mathcal I}=\arg\min_{\gamma} \sum_{j=1}^p[y_{kj}- x_{kj}\cdot \gamma'\widetilde v_j]^2,\quad k\in \mathcal I^c\cup\{i\}.
 		$$
 		Update estimates of $V_0$ as $\widehat V_{\mathcal I}=(\widehat v_{1,\mathcal I},...,\widehat v_{p,\mathcal I})'$, where $$
 		\widehat v_{j,\mathcal I}=\arg\min_{v} \sum_{k\in \mathcal I^c\cup\{i\}}[y_{kj}- x_{kj}\cdot \widehat \gamma_{k, \mathcal I}'v]^2,\quad j=1,...,p.
 		$$
 		\item[Step 3] \textit{Exchange $\mathcal I$ and $\mathcal I^c$.} Repeat Steps 1-2 with $\mathcal I$ and $\mathcal I^c$ exchanged to obtain 
 		$\widehat\gamma_{k,\mathcal I^c}$ for $k\in\mathcal I\cup \{i\}$ and $ \widehat V_{\mathcal I^c}$. Define the estimator for $\theta_{ij}$ as
 		$$
 		\widehat\theta_{ij} = \frac{1}{2} [\widehat\gamma_{i,\mathcal I}' \widehat v_{j,\mathcal I}+\widehat\gamma_{i,\mathcal I^c}' \widehat v_{j,\mathcal I^c}] . 
 		$$
 	\end{description}
 \end{algo}

 We only iterate least squares \textit{once} in Step 2. The least squares steps following the use of nuclear-norm penalized estimation are analogous to approaches in the sparse regression setting that rely on refitting the least squares using selected regressors in a first step, such as post-lasso, e.g. \cite{belloni2013least}. The motivation is similar in wanting to alleviate shrinkage biases induced in the initial penalized estimation step. In addition, we split the sample $ \{1,...,n\}\backslash \{i\}=\mathcal I \cup \mathcal I^c $ so that $i$ is excluded from both subsamples. Splitting in this way ensures that the $\varepsilon_{ij}$ for the $i$ of interest are independent of observations in both subsamples assuming independence across $i$. 
 
 Stage 2, which involves two least squares estimation steps, is the essential stage to alleviating shrinkage bias. It starts with treating $\widetilde V$ from the penalized regression as observed data. A key ingredient of the analysis is to establish that this step produces an approximately unbiased estimator $\widehat\Gamma$, which then allows construction of a well-behaved estimator $\widehat\Theta$ in the final step. Given its importance, we provide the intuition for this step in Section \ref{sec:discussion}.

 \begin{remark}
 	The proposed procedure is similar to the ``alternating minimization" (AltMin) method in the literature, e.g., \cite{hastie2015matrix} and \cite{jain2013low}. There are two key differences. The first is that the AltMin procedure would iterate until convergence. In contrast, we only iterate once and good asymptotic statistical properties are guaranteed. The second difference is that penalization is often carried throughout iterations in the AltMin procedure. Thus, AltMin-type estimators have asymptotic shrinkage biases, which complicates establishing asymptotic normality. By employing unpenalized least squares in Stage 3, our procedure ensures the final estimator does not have large shrinkage bias asymptotically. 
 \end{remark}

 \section{Discussion}\label{sec:discussion}

 We make use of a ``rotation" argument and the structure of the low-rank matrix parameter to prove that eigenspace estimation is approximately unbiased if singular vectors are incoherent. Before turning to the matrix parameter setting, we introduce the main idea in the context of estimating a scalar parameter that is itself a product of two parameters.

 \subsection{Inference about Product Parameters}\label{subsub: product}
 
 Consider the problem of estimating a scalar parameter $\theta$ that can be written as the product of another two scalar parameters:
 $$
 \theta= \gamma \beta,\quad \gamma, \beta\in\mathbb R.
 $$
 Suppose some initial estimate $\widetilde\beta$ can be obtained for $\beta$, which is consistent but may have first-order bias. 
 In addition, suppose that $\theta$ can be identified as the unique minimizer of a population loss function: 
 $$
 \theta =\arg\min_{a\in \mathcal A} Q(a)
 $$
 where $Q(\cdot)$ is the loss function and $\mathcal A$ is the parameter space. Let $Q_n(\cdot)$ denote the sample version of $Q$ and suppose both $Q_n$ and $Q$ are twice continuously differentiable. Let $\dot{Q}_n(a)=\frac{d}{da}Q_n(a)$, $\dot{Q}(a)=\frac{d}{da}Q(a)$, and $\ddot{Q}(a)= \frac{d^2}{da^2}Q(a)$. Let $(\gamma,\beta,\theta)$ represent the true values of the parameters.
 
 We consider an iterative procedure to estimate $\theta$ that mimics the approach we propose in the matrix parameter setting:
 
 \smallskip
 (i) Obtain $\widehat\gamma=\arg\min_{\gamma} Q_n(\gamma \widetilde\beta)$
 
 (ii) Obtain $\widehat\beta=\arg\min_{\beta} Q_n(\widehat\gamma \beta)$
 
 (iii) Set $\widehat\theta= \widehat\gamma\widehat\beta $.
 
 \smallskip
 In step (i), standard analysis based on Taylor expansion leads to
 \begin{equation}\label{eq3.2}
 	\widehat\gamma -\gamma
 	=G^{-1} \beta\dot{Q}_n(\theta)
 	+G^{-1} \partial^2_{\gamma,\beta} Q(\gamma\beta)(\widetilde\beta-\beta)
 	+o(|\widehat\gamma-\gamma|),\footnote{We have $0=\partial_{\gamma} Q_n(\widehat\gamma \widetilde\beta)=\partial_{\gamma} Q_n(\gamma \beta) 
 		+\partial^2_{\gamma,\beta} Q_n(\gamma \beta)(\widetilde\beta-\beta)
 		+\partial^2_{\gamma,\gamma} Q_n(\gamma \beta)(\widehat\gamma-\gamma)+O(|\widehat\gamma-\gamma|^2+|\widehat\beta-\beta|^2) $, and $\partial_{\gamma} Q_n(\gamma \beta) = \beta\dot Q_n(\theta)$. Inverting $\partial^2_{\gamma,\gamma} Q_n(\gamma \beta)$ leads to (\ref{eq3.2}).}
 \end{equation}
 where $G=-\partial^2_{\gamma,\gamma}Q_n(\gamma\beta)$. The first term in the expansion is the score which leads to asymptotic normality in usual cases. The second term reflects the effect of the initial estimate $\widetilde\beta$. 
 
 {In general, the second term will lead to poor performance of $\widehat\gamma$ if the initial estimator $\widetilde\beta$ is ill-behaved. 
 	One approach, dating back to at least \cite{Neyman59}, is to rely on estimation strategies make use of appropriately ``orthogonalized'' scores. This property would correspond to basing estimation on an objective function that satisfied $\partial^2_{\gamma,\beta} Q(\gamma\beta)=0$ at the population level in the present case.} See, e.g., \cite{CHS:AnnRev} for a review of such approaches.

 The fact that the ``product parameter'' $\theta$, rather than $\gamma$ itself, is the object of interest allows a new argument in this paper. The key is that the loss function depends on $\theta$ only through the product of $(\gamma,\beta)$. It is straightforward to verify that 
 $$\partial_{\gamma,\beta}^2 Q(\gamma\beta) 
 =\gamma\ddot{Q}(\theta) \beta + \underbrace{\dot{Q}(\theta)}_{\text{score}=0}
 =\gamma\ddot{Q}(\theta) \beta.
 $$
 Substituting this expression for $\partial_{\gamma,\beta}^2 Q(\gamma\beta)$ into (\ref{eq3.2}) then produces 
 $$
 \widehat\gamma -\gamma =G^{-1} \beta\dot{Q}_n(\theta) +G^{-1} \gamma \ddot{Q}(\theta)\beta(\widetilde\beta-\beta) +o(|\widehat\gamma-\gamma|).
 $$
 An important observation is that the second term $G^{-1} \gamma \ddot{Q}(\theta)\beta(\widetilde\beta-\beta)$ is proportional to $\gamma$. We can move it to the left-hand-side of the expansion for $\widehat\gamma$ to obtain
 $$
 \widehat\gamma - H\gamma = G^{-1} \beta\dot{Q}_n(\theta)
 +o(|\widehat\gamma-\gamma|)
 $$
 for $H:= 1+ G^{-1} \ddot{Q}(\theta)\beta(\widetilde\beta-\beta).$ Hence, $\widehat\gamma$ estimates a ``rotated'' version of $\gamma$ with no first-order bias. As such, in the sense of estimating the ``space'' of $\gamma$, the effect $\widetilde\beta-\beta$ is negligible as it is ``absorbed" by the rotation matrix. In addition, $H$ is asymptotically invertible since $H\to^P 1$.

 Moving on to step (ii),   it is clear that $\widehat\beta$ estimated in this step will be an approximately unbiased estimator for $H^{-1}\beta$. The rotation matrices will then cancel in estimating the parameter of interest:
 $$\widehat\theta:=\widehat\gamma\widehat\beta
 =\gamma HH^{-1}\beta +o_P(1) = \theta + o_P(1).
 $$
 After appropriate scaling, the leading term hidden in the $o_P(1)$ in the final expression will also be asymptotically normal. It is this cancellation of rotation matrices that underlies our ``rotation-unbiasedness". Furthermore, in models where $\sqrt{n}$-consistency is attainable, $\sqrt{n}(\widehat\theta-\theta)$ is asymptotically normal as long as the initial estimator satisfies $|\widetilde\beta-\beta|=o_P(n^{-1/4})$.

 The intuition of ``rotation-unbiasedness" as described above has also been observed previously in the literature. \cite{keshavan2010matrix} studied local geometric properties in Grassmann manifold and related optimization algorithms. \cite{sun2016guaranteed} examined the local geometry of the loss $f(\Gamma, V)=\|Y-\Gamma V'\|_F^2$ in the matrix completion context.  Our observation aligns with theirs, but we use this observation in the context of estimation bias. In our setting, the geometry of product-parameter $\gamma\beta$ ensures that the effect of first-step estimation error $\widetilde\beta-\beta$ is aligned with the space of the true $\gamma$. This alignment results in our ability to establish asymptotic normality of our final estimator without relying on any additional debiasing schemes beyond the use of a single set of least squares steps in Step 2 of our algorithm. 
 
 \subsection{Eigenspace estimation}
 
 In the low-rank inference context, recall that $\Theta_0=\Gamma_0 V_0',$ which is the product of two parameters. Related to the simple example in the previous section, we think about $V_0$ as $\beta$ and use the singular vectors $\widetilde V$ extracted from the nuclear-norm regularized estimator as its initial estimate.

 Write $\widehat\Gamma=(\widehat\gamma_1,...,\widehat\gamma_n)'$ and $\widetilde V=(\widetilde v_1,...,\widetilde v_p)'$. Then for each $i\leq n$, 
 $$
 \widehat\gamma_i=\arg\min_{\gamma} Q_{i}(\gamma, \widetilde V),\quad Q_{i}(\gamma, \widetilde V):=\sum_{j=1}^p[y_{ij}- x_{ij}\cdot \gamma'\widetilde v_j]^2.
 $$
 Then for some $J\times J$ matrix $G^{-1}$, Taylor expansion leads to 
 $$
 \widehat\gamma_i-\gamma_i = G^{-1} \partial_\gamma Q_{i}(\gamma_i, V_0)
 + \frac{\partial^2Q_{i}(\gamma_i, V_0)}{\partial\gamma\partial\vecc(V)} \vecc(\widetilde V-V_0) + \text{higher order terms}.
 $$
 The leading term $G^{-1} \partial_\gamma Q_{i}(\gamma_i, V)$ is asymptotically normal if $V_0$ is incoherent. The second term satisfies
 \begin{eqnarray*}
 	\frac{\partial^2Q_{i}(\gamma_i, V)}{\partial\gamma\partial\vecc(V)} \vecc(\widetilde V-V_0) &=&H_1 \gamma_i + \Delta_{i}
 \end{eqnarray*}
 for some rotation matrix $H_1$ and higher order term $\Delta_{i}$.
 
 The term $H_1 \gamma_i$ is a rotated version of $\gamma_i$. Defining $H:= I+ H_1$ and moving $H_1\gamma_i$ to the left side then yields the matrix  form expansion:
 $$
 \widehat\Gamma-\Gamma_0 H= \partial_\Gamma Q_{p}(\Gamma, V_0) G^{-1} + \text{higher order terms}
 $$
 where $\partial_\Gamma Q_{p}(\Gamma, V_0) $ is an $n\times J$ matrix whose $i^{\text{th}}$ row is the transpose of $\partial_\gamma Q_{i}(\gamma_i, V_0)$.
 Following the logic outlined in Section \ref{subsub: product}, we have that the follow-up estimator $\widehat V$ will recover an appropriately rotated version of $V$ to cancel with $H$. Consequently,
 $$
 \widehat\Theta = \widehat\Gamma\widehat V' \approx \Gamma_0 HH^{-1}V_0'
 =\Gamma_0V_0'=\Theta_0.
 $$

 It will then follow that $\widehat\Theta$ is approximately unbiased with sampling distribution that can be approximated by a   centered Gaussian distribution. As in the simpler scalar case, the key feature we take advantage of is that we only need the estimated $V_0$ to have the same span as the actual $V_0$ if our goal is inference about $\Theta$ or the space spanned by the singular vectors. 
 
 \subsection{Sample Splitting }\label{subsec: sample split}
 Our argument for demonstrating that the higher-order term, $\Delta_i$ is asymptotically negligible relies on sample splitting. The structure of $\Delta_i$ is
 $$
 \Delta_{i}= B \sum_{j=1}^p(\widetilde v_j-v_j)\varepsilon_{ij} x_{ij}.
 $$
 for some matrix $B$. 
 
 For a fixed $i$, let $\mathcal I\subset\{1,...,n\}\backslash i $ be a subset of unit indexes that does not include $i$; and let 
 $$
 \mathcal D_{\mathcal I} =\{(y_{kj}, x_{kj}): k\in \mathcal I, j\leq p\}.
 $$
 Our approach uses only data $\mathcal D_{\mathcal I}$, rather than making use of the full data set, for the initial nuclear-norm penalized regression from which we extract singular vectors for the subsequent OLS rotation-debiasing step.  Maintaining independence across $i$, estimation errors in the initial estimator of the singular vectors are then independent of variables indexed by $i$ because $i\notin \mathcal I$. Assuming $ \varepsilon_{ij} $ is independent across subjects $i=1,..., n$, 
 we then have that $\varepsilon_{ij} x_{ij} $ is independent of estimation error in the singular vectors, $\widetilde v_j-v_j$. We can then easily argue that $ \Delta_{i}$ has no impact on the asymptotic distribution of the final estimator.

 \section{Asymptotic Results}\label{sec:4}
 
 We now present our main results. In Section \ref{sec:norm}, we lay out key conditions and state our result on asymptotic normality. We then provide a brief discussion of semiparametric efficiency in Section \ref{sub: efficiency} and then highlight the role of the key SSV and incoherence conditions in Section \ref{sub: SSV} where we present novel minimax results. Finally, we present an alternative estimation scheme for dense linear combinations in Section \ref{sub: dense no SSV}.

 \subsection{Asymptotic Normality}\label{sec:norm}
 
 The goal is to establish inferential theory for the linear functional $\theta_i'g$. Here $\theta_i'$ denotes the $i^{\text{th}}$ row of $\Theta$, and $g=(g_1,...,g_p)'\in\mathbb R^p$ is a vector of weights of interest with non-zero weights collected in 
 $$
 \mathcal G=\{j\leq p: g_j\neq 0\}.  
 $$ 
 Inference on a linear combination of a column of $\Theta$ can be carried out similarly by switching the roles of $i$ and $j$. 
 Two examples of $g$ are of particular interest: 
 
 \smallskip

 \textit{Sparse weights}: $g$ is a sparse vector with a bounded number of non-zero elements: 
 \begin{equation}\label{eq4.1sparse}
 	|\mathcal G|=O(1).
 \end{equation}
 $\theta_i'g$ thus corresponds to a linear combination of a small number of elements and may be used when we are particularly interested in just a few components of $\theta_i$. The sparse $g$ scenario includes $g=e_j$ where $e_j=(0,...,0,1,0,...,0)$ is the $j^{\text{th}}$ standard vector for a particular $j$ in which case $\theta_i'g=\theta_{ij}$. 
 
 \smallskip

 \textit{Dense weights}: $g$ is a dense vector, in the sense that $|\mathcal G| = O(p)$, but 
 \begin{equation}\label{eq4.1dense}
 	\max_{j\leq p}|g_j|<Cp^{-1},\quad \text{ for some } C>0.
 \end{equation}
 In this case, $\theta_i'g$ typically represents a weighted average of all components of $\theta_i$ and includes $g=(\frac{1}{p},...,\frac{1}{p})'$ as a special case.

 The following assumption formally quantifies the requirement of $g$. Consider the matrix of standardized right singular vectors:
 $$
 \bar V'= \sqrt{p}V_0'.
 $$

 \begin{assumption}\label{ass4.3}
 	For some constants $c,C>0,$
 	$$ c<\| \bar V'g\|\leq C,\quad \|g\|<C . $$ 
 	In addition, $g$ satisfies either (\ref{eq4.1sparse}) or (\ref{eq4.1dense}).
 \end{assumption}
 The next assumption restricts the noise data generating process (DGP).
 \begin{assumption}[DGP for $\varepsilon_{ij}$]\label{assdgp:NOISE}
 	(i) $\varepsilon_{ij}$ is conditionally independent across $i\leq n$ and $j\leq p$, given $(\Theta, X)$. Also,
 	$\E(\varepsilon_{ij}| \Theta,X)=0$ and $ \max_{ij}\E [\varepsilon_{ij}^4|\Theta, X]<C $ almost surely. 
 	(ii) At least one of the following holds:
 	
 	\begin{description} 
 		
 		\item[a] $\min_{ij}\Var( \varepsilon_{ij} |\Theta, X)>c.$
 		\item[b] $ \varepsilon_{ij}$ can be decomposed as $\varepsilon_{ij}= e_{ij} x_{ij} $ with $\min_{ij}\Var( e_{ij} |\Theta, X)>c.$
 	\end{description}
 \end{assumption}

 Assumption \ref{assdgp:NOISE} (ii) is stated in a way that specifically covers the well-known matrix completion problem:
 $$
 y_{ij}^*= \theta_{ij} + e_{ij}
 $$
 where $y_{ij}^*$ may not be observable, and $x_{ij}$ indicates the observability for each element. Then $\varepsilon_{ij}= e_{ij} x_{ij}$.

 The assumption below restricts the DGP of the design variable $x_{ij}$. The restrictions imposed are mild, and the assumption is stated so as to cover a variety of cases. Specifically, conditions (a)-(c) in Assumption \ref{assdgp} allow for various types of dependence among the $x_{ij}$.

 \begin{assumption}[DGP for $ x_{ij} $]\label{assdgp} 
 	(i) $\max_{ij}| x_{ij} |<C$ and $x_{ij}$ is independent of $\Theta$.
 	(ii) At least one of the following holds:
 	\begin{description}
 		\item[a] 
 		$ x_{ij}^2 $ does not vary across $i\leq n$. 
 		\item[b] $ x_{ij} ^2$ is independent across $(i,j)$.
 		In addition, $\E x_{ij}^2 $ does not vary with $i$. 
 		\item[c] $x_{ij}\in\{0,1\}$. Also, define
 		$
 		\mathcal B_i:=\{j\leq p: x_{ij} =1\} .
 		$
 		Then there is a set $\bar{\mathcal B}\subseteq\{1,...,p\}$, so that 
 		\begin{equation}\label{eq4.2daerv}
 			\max_{i\leq n}\sum_{j=1}^p1\{j\in \bar{\mathcal B} \vartriangle \mathcal B_i\} = o_P\left(d_{n,p}\right),\quad d_{n,p}:= \left(\frac{\min\{n,p, \psi_{np}\}p}{ (n+p)J + \|R\|_{(n)}^2}\right) J^{-(2+d+2b)}.\end{equation}
 		where $\bar{\mathcal B} \vartriangle \mathcal B_i =[\bar{\mathcal B} \cap \mathcal B_i ^c]\cup [\bar{\mathcal B}^c \cap \mathcal B_i]$ is the symmetric difference of two sets, and $d,b\geq0$ are constants defined in Assumption \ref{ass4.7} below.
 	\end{description}
 \end{assumption}

 Under Condition (ii).\textbf{a}, we can accommodate both conventional factor models by setting $x_{ij} = 1$ for all $i,j$ as well as conditional empirical factor models, where $x_{ij} = x_j$, with varying coefficients. An example of the latter is an asset pricing model with risk premia that vary across assets and over time where $x_j$ represents the common time-varying market factor. 
 
 Condition (ii).\textbf{b} could cover examples of PCA with missing data under heterogeneous missing probabilities as in \cite{zhu2019high}. In this case, we may take $j$ to represent subjects and $i$ to represent the index of repeated sampling within subject. The condition also accommodates scenarios where $x_{ij}$ represents a treatment indicator where random assignment of subjects $i$ to treatment states occurs independently in each period $j$. Such a structure may approximate some digital experimentation settings.

 Condition (iii).\textbf{c} allows for  some types of strong dependence in $x_{ij}$ across both $i$ and $j$ but restricts $x_{ij}$ to be binary as would be appropriate in missing data, matrix completion, and treatment assignment settings. In this condition, the set $\mathcal B_i$ represents unit-specific ``observation times" for unit $i$; and the set $\bar{\mathcal B}$ is common to all units. The quantity $\max_{i\leq n}\sum_{j=1}^n1\{j\in \bar{\mathcal B} \vartriangle \mathcal B_i\}$ thus measures the difference between the ``unit specific" observation times and the ``common" observation times. Condition (iii).\textbf{c} requires that these differences should be negligible. Hence,  all units should be observed at approximately the same time. For instance, suppose every unit is observed most of the time in the sense that 
 $$
 \max_{i\leq n} \sum_{j=1}^p1\{j: x_{ij}=0\} = o_P\left(d_{n,p}\right).
 $$
 Then Condition (iii).\textbf{c} holds with $\bar{\mathcal B}=\{1,...,p\}$. 

 Next, recall that $v_j$ and $u_i$ are respectively the $j^{\text{th}}$ right singular vector and the $i^{\text{th}}$ left singular vector of $\Theta_0$.

 \begin{assumption}[Incoherent singular vectors]\label{ass:4.6} 
 	$$ \E \max_{j\leq p}\|v_j\|^2= O (Jp^{-1}),\quad \E \max_{i\leq n}\|u_i\|^2=O ( Jn^{-1}).$$
 	
 \end{assumption}
 
 The incoherence condition ensures that information regarding the eigenspace  accumulates as the dimension increases and allows us to apply our ``rotation" argument to argue that estimating the eigenvector space is asymptotically unbiased. We provide low-level conditions that are sufficient for the incoherence condition in a treatment effects context where the low-rank matrix is formulated using nonparametric sieve representations in equation (\ref{eq5.3}).

 The next assumption places restrictions on various moments. 
 \begin{assumption}[Moment bounds]\label{ass.5mom}
 	There are matrices $A_i, B_j$ whose eigenvalues are bounded away from zero and infinity, so that 
 	$$
 	\max_{i\leq n}\|\sum_{j=1}^p x_{ij} ^2v_jv_j'- A_i\|=o_P(J^{-1/2}),\quad \max_{j\leq p} \|\frac{n}{|\mathcal S|}\sum_{i\in\mathcal S} x_{ij} ^2 u_i u_i' -B_j\|=o_P(J^{-1/2}).
 	$$
 	This should hold for $\mathcal S$ being sets $\{1,...,n\}, \mathcal I$ and $\mathcal I^c$. 
 \end{assumption}

 Finally, we present the required conditions on $\psi_{np}$, the signal strength of the non-zero singular values. Recall that $\psi_j(A)$ denotes the $j^{\textnormal{th}}$ largest singular value of $A$. We allow the eigengap to change with $J$, depending on constants $b, d \geq 0$. This generality complicates statement of the condition but is needed to accommodate settings where the rank $J$ is allowed to increase with sample sizes. We provide low-level conditions that are sufficient for the following assumption in the context of a treatment effect example in Lemma \ref{lem5.2}.

 \begin{assumption}[Signal-noise]\label{ass4.7} There are constants $b,d\geq 0$ such that

 	(i)  $\psi_{np}\leq \psi_J(\Theta_0)<\psi_1(\Theta_0)\leq O_P(J^b\psi_{np})$   for  a sequence $\psi_{np}\to\infty$ that satisfies  
 	$$
 	n^{-1/2} p J ^{7/2+2d+5b}+ (p\vee n)^{3/4} J^{5/4+d+2b} =o( \psi_{np} ). 
 	$$
 	(ii) Eigengap: There are $c, C>0$ and a sequence $\psi_{np}\to\infty$ so that with probability approaching one, 
 	$$
 	\psi_{j}(\Theta_0) - \psi_{j+1}(\Theta_0) \geq c\psi_{np}J^{-d},\quad j=1,..., J.
 	$$ 
 	(iii) The rank $J$ satisfies
 	$$
 	J^{3+2d+6b}=o_P( \min\{\sqrt{p},\sqrt{n}, p/\sqrt{n}\}). $$
 	(iv) The low-rank approximation error matrix $R=(r_{ij})_{n\times p}$ satisfies
 	$$ \max_{ij}|r_{ij}|^2(p\vee n)^{2} J^{3+4b}=o(1).
 	$$
 \end{assumption}

 \begin{theorem}\label{th4.1} 
 	Suppose $g$ is either dense or sparse, in the sense of (\ref{eq4.1sparse}) and (\ref{eq4.1dense}). Suppose Assumptions \ref{ass4.3}-\ref{ass4.7} hold, and the nuclear-norm tuning parameter satisfies $\nu>C(\sqrt{n+p})$ for some contant $C>0$. Then for a fixed $i\leq n$, 
 	$$
 	\frac{ \widehat\theta_i'g-\theta_i'g}{\sqrt{s_{np,1}^2+s_{np,2}^2}} \to^d N(0,1),
 	$$
 	where, with $ L_{j }=\sum_{i =1}^n x_{ij} ^2 \gamma_i \gamma_i'$ and $ 
 	\bar B=\sum_{j=1}^p(\E x_{ij} ^2) v_j v_j' $, 
 	\begin{eqnarray*}
 		s_{np,1}^2 &:=& 
 		\sum_{j=1}^p\sum_{t =1}^n \Var( \varepsilon_{tj} |\Theta, X)
 		[ \gamma_i' L_{j}^{-1} \gamma_{t} ]^2 x_{tj} ^2 g_j^2\cr
 		s_{np,2}^2&:=& 
 		\sum_{j=1}^p\Var( \varepsilon_{ij} |\Theta, X) x_{ij} ^2[v_j'\bar B^{-1} V_0'g]^2 .
 	\end{eqnarray*}
 \end{theorem}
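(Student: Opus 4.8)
The plan is to work from the rotation-debiasing expansion sketched in Section \ref{sec:rotation}, which for a fixed $i$ and a single split $\mathcal I$ reads
$$
\widehat\theta_i'g-\theta_i'g
=\underbrace{\gamma_i'(\widehat V_{\mathcal I}-H^{-1}V_0)'g}_{\text{(I)}}
+\underbrace{\tfrac{1}{\sqrt p}(\widehat\gamma_{i,\mathcal I}-H'\gamma_i)'\bar V'g}_{\text{(II)}}
+\text{remainder},
$$
and to show that the two leading linear terms (I) and (II) deliver the variance components $s_{np,1}^2$ and $s_{np,2}^2$ respectively, while every remaining term is $o_P(n^{-1/2}+p^{-1/2})$, i.e. negligible relative to the leading standard error. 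I would first invoke Proposition \ref{p:4.1} to obtain $\|\widetilde V_S-V_0H_S\|_F=O_P(\omega_{np}\psi_{np}^{-1})$ and $\|\widetilde\Theta_S-\Theta_S\|_F=O_P(\omega_{np})$ on each subsample, controlling the quality of the nuclear-norm input fed into Stage 3. These rates, together with Assumptions \ref{ass4.2} and \ref{ass:4.6}, pin down the rotation $H$ (built as in Section \ref{subsub: product} from $B^{-1}\sum_j v_j\E(x_{ij}^2)(\widetilde v_j-v_j)'$) and guarantee $H\to^P\I$ with $H$ asymptotically invertible.

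Next I would derive the two score representations. Writing the first-stage OLS normal equations for $\widehat\gamma_{i,\mathcal I}$ and expanding around $\gamma_i$, the key algebraic fact from Section \ref{subsub: product} is that $\partial^2_{\gamma,\vecc(V)}Q_i$ factors so that the entire effect of $\widetilde V_{\mathcal I}-V_0$ enters as a left-multiplication of $\gamma_i$; absorbing this into $H$ leaves, at leading order,
$$
\widehat\gamma_{i,\mathcal I}-H'\gamma_i
\approx \bar B^{-1}\sum_{j=1}^p v_j x_{ij}\varepsilon_{ij}+\Delta_{i,1}+\Delta_{i,2},
$$
whose leading term, contracted with $\tfrac1{\sqrt p}\bar V'g$, reproduces $s_{np,2}^2$ with the weights $v_j'\bar B^{-1}V_0'g$. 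Feeding $\widehat\gamma_{k,\mathcal I}\approx H\gamma_k$ as regressors into the second OLS step and using Assumption \ref{ass.5mom}(i) to invert $L_j=\sum_t x_{tj}^2\gamma_t\gamma_t'$, I would obtain the analogous expansion for the rows of $\widehat V_{\mathcal I}-H^{-1}V_0$, with $j$th-row leading term $L_j^{-1}\sum_t x_{tj}\gamma_t\varepsilon_{tj}$; contracting with $\gamma_i'$ and $g$ yields $s_{np,1}^2$ with the weights $\gamma_i'L_j^{-1}\gamma_t$. Here Assumption \ref{ass.5mom}(ii), namely $2\sum_{i\in\mathcal I}x_{ij}^2u_iu_i'=\sum_{i=1}^n x_{ij}^2u_iu_i'+o_P(J^{-1/2})$, is precisely what converts the half-sample design into the full-sample $L_j$ and $\bar B$, so that averaging the two splits reproduces the stated full-sum variance formulas.

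The remainder must then be shown negligible. The bias-type terms $\Delta_{i,1}$ and $\Delta_{i,2}$ are handled by the sample-splitting argument of Section \ref{subsec: sample split}: since $i\notin\mathcal I$ and $\varepsilon_{ij}$ is independent across $i$ (Assumption \ref{assdgp:NOISE}), $\widetilde V_{\mathcal I}$ is independent of the row-$i$ noise, so $\E(\Delta_{i,2}\mid\mathcal D_{\mathcal I})=0$ with conditional variance bounded by $C\|\widetilde V_{\mathcal I}-V_0H_{\mathcal I}\|_F^2=O_P(\omega_{np}^2\psi_{np}^{-2})$, and $\Delta_{i,1}$ is controlled analogously via the dependence restrictions in Assumption \ref{assdgp}(ii). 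The genuinely quadratic pieces — products of two estimation errors, the cross term $(\widehat\gamma_{i,\mathcal I}-H'\gamma_i)'(\widehat V_{\mathcal I}-H^{-1}V_0)'g$, and the propagation of the first-stage error in $\widehat\gamma$ into the second stage — are where the signal-strength conditions enter: the rates $(p\vee n)^{5/6}J=o(\psi_{np})$ (sparse $g$) and $n^{-1/2}pJ^{7/2}+(p\vee n)^{3/4}J^{5/4}=o(\psi_{np})$ (dense $g$) of Assumption \ref{ass4.7}, combined with Proposition \ref{p:4.1} and the incoherence $\sum_i\|u_i\|^4=o_P(1)$, force these below the leading scale, while $R$ is absorbed through Assumption \ref{ass4.7}(iii). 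Finally I would establish joint asymptotic normality of (I)$+$(II) by a Lyapunov CLT conditional on $(\Theta,X)$, using conditional independence of $\varepsilon_{ij}$ and the fourth-moment bound of Assumption \ref{assdgp:NOISE}(i) to kill the Lyapunov ratio, then integrating out the conditioning.

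I expect the main obstacle to be the simultaneous control of the two coupled OLS stages: one must show both that the shrinkage bias carried by $\widetilde V_{\mathcal I}-V_0H_{\mathcal I}$ is \emph{exactly} absorbed into $H$ at first order rather than leaking into a bias of (I) or (II), and that the first-stage error in $\widehat\gamma$, when reused as the regressor in the second stage, contributes only at second order. Tracking the rotation consistently through both stages while proving that the cross-covariance between (I) and (II) is asymptotically negligible — so that the denominator is the clean sum $s_{np,1}^2+s_{np,2}^2$ with no interaction term — is the delicate part, and it is here that the sample-splitting device and the strong signal-strength requirements of Assumption \ref{ass4.7} do the essential work.
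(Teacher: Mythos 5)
Your plan follows the paper's own route essentially step for step: the same rotation-debiasing expansion into the two score terms $\gamma_i'(\widehat V_{\mathcal I}-H^{-1}V_0)'g$ and $(\widehat\gamma_{i,\mathcal I}-H'\gamma_i)'V_0'g$ (the paper's Lemmas on $\widehat\Gamma_S$ and $\widehat V_{\mathcal I}$), the same use of sample splitting to make $\widetilde V_{\mathcal I}$ independent of the row-$i$ noise, the same conversion of half-sample Hessians to the full-sample $L_j$ and $\bar B$ via Assumption \ref{ass.5mom}(ii), the same reliance on Assumption \ref{ass4.7} to kill the quadratic remainders, and the same conditional-on-$(\Theta,X)$ CLT for the two asymptotically independent sums followed by integrating out the conditioning. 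This matches the paper's proof in structure and in every essential ingredient.
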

 
 To estimate the asymptotic variance, we need to preserve the rotation invariance property of the asymptotic variance. We therefore estimate $\sigma_{np}^2$ separately within subsamples and produce the final asymptotic variance estimator by averaging the results across subsamples. We consider the homoskedastic case where $\Var(\varepsilon_{ij}|\Theta, X)= \sigma_j^2 $ for some constant $\sigma_j^2$, $j=1,...,p.$ In this case, standard errors can be estimated as
 \begin{eqnarray*}
 	\widehat s_{np,1}^2 &:=& 
 	\frac{1}{4}\sum_{j=1}^p\sum_{t \notin\mathcal I} \widehat\sigma_j^2
 	[\widehat \gamma_{i,\mathcal I}' \widehat L_{j,\mathcal I}^{-1} \widehat\gamma_{t} ]^2 x_{tj} ^2 g_j^2
 	+ \frac{1}{4}\sum_{j=1}^p\sum_{t \notin\mathcal I^c} \widehat\sigma_j^2
 	[\widehat \gamma_{i,\mathcal I^c}' \widehat L_{j,\mathcal I^c}^{-1} \widehat\gamma_{t} ]^2 x_{tj} ^2 g_j^2
 	\cr
 	\widehat s_{np,2}^2&:=& 
 	\frac{1}{2} \sum_{j=1}^p \widehat\sigma_j^2 x_{ij} ^2[\widetilde v_{j,\mathcal I}'\widehat B_{\mathcal I}^{-1} \widetilde V_{\mathcal I}'g]^2+\frac{1}{2} \sum_{j=1}^p \widehat\sigma_j^2 x_{ij} ^2[ \widetilde v_{j,\mathcal I^c}'\widehat B_{\mathcal I^c}^{-1} \widetilde V_{\mathcal I^c}'g]^2 \cr
 	\widehat\sigma_j^2&:=& \frac{1}{n} \sum_{t\notin\mathcal I} (y_{tj} - x_{tj}\cdot \widehat\gamma_{t,\mathcal I}' \widehat v_{j,\mathcal I}) ^2+ \frac{1}{n} \sum_{t\notin\mathcal I^c} (y_{tj} - x_{tj}\cdot \widehat\gamma_{t,\mathcal I^c}' \widehat v_{j,\mathcal I^c}) ^2
 \end{eqnarray*}
 where $\widehat L_{j,\mathcal I}=\sum_{t\notin\mathcal I} x_{tj}^2\widehat \gamma_t\widehat\gamma_t'$, 
 and $ \widehat B_{\mathcal I}=\sum_{j=1}^p x_{ij} ^2\widetilde v_{j,\mathcal I}\widetilde v_{j,\mathcal I}'
 $, and $\widehat L_{j,\mathcal I^c}$ and $\widehat B_{\mathcal I^c}$ are defined similarly. 
 
 It is interesting to note that  $\sigma^2_{np}:=s_{np,1}^2+s_{np,2}^2=O_P(\frac{1}{n}\|g\|^2+\frac{1}{p})$ in the case of fixed $J$. Thus, in this setting, the scaling of the asymptotic variance depends heavily on $\|g\|^2$. 
 
 \subsection{Semiparametric Efficiency}\label{sub: efficiency}

 The semiparametric efficiency bound for the case of sparse $g$ was established by \cite{chen2019inference} (Lemma 2) in matrix completion settings and by \cite{iwakura2014asymptotic} (Theorem 4.5) in pure factor models. Our asymptotic variance attains these previously established bounds if $e_{ij} $ is i.i.d. homoskedastic Gaussian, so we do not further discuss semiparametric efficiency in the sparse setting. 
 
 We now provide a semiparametric efficiency bound in the case of dense $g$ and verify that our estimator achieves this bound. For concreteness, suppose we are interested in 
 $h(\Theta)=\theta_1'g$ where $\theta_1'$ is the first row of $\Theta$ and $g$ is dense. In providing our result, we will allow for a wide range of distributions for $X_{1j}$ while maintaining the assumption that the error term is Gaussian to make calculation tractable. 
 
 Specifically, we suppose that $x_{ij}$ follows the distribution $f$ and $e_{ij}\sim N(0,\sigma^{2})$ are independent across $(i,j)$. Let $\mu_f=\E_{f} X_{1j}^2$. Under Assumptions \ref{ass4.3}-\ref{ass4.7}, the dominant term in the asymptotic variance is 
 \begin{align*}
 	s_{np,2}^{2} & =\sigma^{2}\sum_{j=1}^{p}x_{ij}^{2}[v_{j}'\bar{B}^{-1}V_0'g]^{2}\\
 	& =s_{*}^{2}(\Theta,f,\sigma)+o_P(s_{np,2}^{2}),\quad \text{where } s_{*}^{2}(\Theta,f,\sigma)=\sigma^{2}\mu_f^{-1}\|V_0'g\|^{2},
 \end{align*}
 and we also have $s_{np,1}^2=o_P(s_{np,2}^2)$. 
 Hence, $\widehat\theta_i'g-\theta_i'g=O_P(\|V_0'g\|)$ with asymptotic variance
 $$
 s_{np,1}^{2}+s_{np,2}^{2}=s_{*}^{2}(\Theta,f,\sigma)(1+o_{P}(1))
 $$
 in this case. 
 
 The following result verifies that $s_{*}^{2}(\Theta,f,\sigma)$ matches with the semiparametric efficiency bound. The notion of semiparametric efficiency in the presence of high-dimensional nuisance parameters is adopted from \cite{jankova2018semiparametric}. The idea is to derive the asymptotic Cram\'er-Rao bound for asymptotically unbiased estimators, and needs to be formally established in the high-dimensional setting. Our result is novel relative to \cite{jankova2018semiparametric} because they deal with sparse models and our setting has low-rank matrices as the nuisance parameters. 
 
 \begin{theorem}
 	\label{thm: semiparam bnd}
 	Consider  $h(\Theta)=\theta_1'g$, where $\theta_1'$ is the first row of $\Theta$ and $g$ is dense.  Let $x_{ij}\sim f$
 	and $e_{ij}\sim N(0,\sigma^{2})$ be independent across $(i,j)$. Define 
 	$$
 	\mathcal{M}=\left\{ (A,f,\sigma):\ \rank(A)\leq J,\ {Assumptions\ \ref{ass4.3}-\ref{ass4.7}\ hold}\right\} .
 	$$
 	Suppose that $T(Y,X)$ is an asymptotically unbiased estimator of $h(\Theta)$ in the sense that $\E_{(\Theta,f,\sigma)}T(Y,X)-h(\Theta)=o(s_{*}(\Theta,f,\sigma))$ where $\E_{(\Theta, f,\sigma)}$ denotes the expectation with respect to a given parameter $(\Theta, f,\sigma)$.  Then for any  sequence of $(\Theta,f,\sigma)\in\mathcal{M}$,
 	\[
 	\liminf_{n,p\rightarrow\infty}\frac{\E_{(\Theta,f,\sigma)}[T(Y,X)-h(\Theta)]^2}{s_{*}^{2}(\Theta,f,\sigma)}\geq1.
 	\]
 \end{theorem}
 

 \subsection{The role of spiked singular-values and incoherence}\label{sub: SSV}
 
 Two  key conditions that underlie our main results are the incoherence condition, Assumption \ref{ass:4.6}, and the spiked singular-value (SSV) condition, Assumption \ref{ass4.7}. We demonstrate the role of these conditions by providing minimax theory for estimating $\theta_i'g$ for a sparse or dense $g$ in a simple matrix completion problem where the missing indicators $x_{ij}$ are independent Bernoulli random variables without imposing SSV or incoherence.

 Define the following set of low-rank matrices
 \[
 \Scal=\left\{ A\in\RR^{n\times p}:\ \rank(A)\leq J\ {\rm and}\ \max_{1\leq i\leq n}\max_{1\leq j\leq p}|A_{ij}|\leq c_{1}\right\} 
 \]
 for a constant $c_{1}>0$ and for $J\geq1$. Here, $J$ is allowed to be either a fixed constant or a sequence tending to infinity.

 We prove the following result for matrix completion over the space $\Scal$. 
 Let $y_{ij}=x_{ij}\theta_{ij}+e_{ij}$, where $x_{ij}\sim{\rm Bernoulli}(\rho_{j})$
 and $e_{ij}\sim N(0,\sigma_{ij}^{2})$ are independent across $(i,j$).
 Suppose that there are constants $c_{2},...,c_{6}>0$ such that $\rho_{j}\in(c_{2},1-c_{2})$
 and $\sigma_{ij}\in(c_{3},c_{4})$ for any $(i,j)$.
 Let $\rho=(\rho_{1},...,\rho_{p})'$ and $\sigma=\{\sigma_{ij}\}_{1\leq i\leq n,\,1\leq j\leq p}$. In the theorem below, $T$ represents any measurable function of the data, typically regarded as an ``estimator" for $h(\Theta)=\theta_1'g.$

 \begin{theorem}[Minimax Rate]
 	\label{thm: minimax sparse}
 	Consider estimating $h(\Theta)=\theta_1'g=\sum_{j=1}^p\theta_{1j}g_j,$ and let $P_{(\Theta, f,\sigma)}$ denote the probability measure with respect to a given parameter $(\Theta, f,\sigma)$. We have the following results:
 	\begin{enumerate}
 		\item Sparse $g$: Let $g_1=1$ and $g_j=0$ for $j\geq 2$, i.e., $h(\Theta)=\theta_{11}$. Then 
 		\begin{equation}
 			\inf_{T}\sup_{\Theta\in\Scal}P_{(\Theta,\rho,\sigma)}\left(\left|T-h(\Theta)\right|>\kappa\right)>1/4, \label{eq: minimax sparse functional}
 		\end{equation}
 		where $\kappa>0$ is a constant depending on $(c_{1},c_{3})$
 		and $\inf_{T}$ is taken over all measurable functions of the data
 		$(X,Y)$.  
 		\item Dense $g$: Let $|g_{j}|\in[c_{5}/p,\ c_{6}/p]$ for all $j \in \{1,...,p\}$. Then 
 		\begin{equation}
 			\inf_{T}\sup_{\Theta\in\Scal}P_{(\Theta,\rho,\sigma)}\left(\left|T-h(\Theta)\right|>\kappa p^{-1/2}\right)>1/4, \label{eq: minimax bnd}
 		\end{equation}
 		where $\kappa>0$ is a constant depending on $(c_{1},c_{3},c_{5})$
 		and $\inf_{T}$ is taken over all measurable functions of the data
 		$(X,Y)$. 
 	\end{enumerate}
 \end{theorem}

 Theorem \ref{thm: minimax sparse} gives the minimax rate \textit{without} SSV and the incoherence condition. It provides a similar intuition to \cite{koltchinskii2020efficient}. For instance, (\ref{eq: minimax sparse functional}) shows that it is impossible to guarantee entrywise consistency for sparse $g$ in the considered setting without SSV or incoherence.

 In addition, Equation (\ref{eq: minimax bnd}) implies that the rate $O_P(p^{-1/2})$ is minimax optimal for estimating dense averages in the absence of SSV and incoherence. This rate of convergence is   slower than that obtained in Theorem \ref{th4.1} which makes use of SSV and incoherence. For instance, in the factor model with a finite number of strong factors, Theorem \ref{th4.1} implies that the rate of convergence can be as fast as $\frac{1}{p}\sum_j\widehat\theta_{ij}-\frac{1}{p}\sum_j \theta_{ij}= O_P\left(\frac{1}{\sqrt{np}}+\frac{1}{p}\right)$.\footnote{This rate holds if the factors have zero mean so that $V_0'g=\frac{1}{p}\sum_{j=1}^pv_j=O_P(p^{-1})$, which is the case for no-intercept factor models. Strictly speaking, this setting was ruled out by Assumption \ref{ass4.3}, which requires $\|V_0'g\|\geq cp^{-1/2}$. However, Assumption \ref{ass4.3} is used only for obtaining the asymptotic distribution. 
 	This assumption can be relaxed when only the rate of convergence is of interest. }

 These minimax results for estimating linear combinations of elements of a low-rank matrix {without SSV and incoherence} are new to the literature. The result closest to ours is \cite{koltchinskii2020efficient} which provides minimax rates for estimating linear functionals of the eigenvectors of low-rank matrices. They show that the minimax optimal rate can be slow if the SSV condition does not hold. Other results on the minimax bounds for learning an eigenspace can be found in \cite{berthet2013optimal}, \cite{birnbaum2013minimax}, and \cite{cai2013sparse}.

 \subsection{Dense functional inference without SSV and Incoherence}\label{sub: dense no SSV}
 
 When  $g$ is a vector of \textit{dense} weights, the second minimax result in Theorem \ref{thm: minimax sparse} suggests that consistency can be achieved without the SSV and incoherence conditions at the cost of a slower rate of convergence. For completeness, we introduce an alternative estimator that could be used when one does not with to impose these assumptions.
 
 Specifically, suppose $g=(g_1,...,g_p)'\in\mathbb R^p$ is a vector of dense weights as defined in \eqref{eq4.1dense}, and we are interested in 
 the functional $h_i(\Theta):=\theta_i'g$. We propose the following 
 estimator in the spirit of inverse probability weighting:
 $$
 \widehat{h_i(\Theta)} = \sum_{j=1}^p\frac{g_j y_{ij}x_{ij}}{\widehat \mu_{j,i}^2},\quad \widehat\mu_{j,i}^2=\frac{1}{n-1}\sum_{k\neq i}x_{kj}^2.
 $$
 Note that this estimator does not require knowing the rank or even that the rank is consistently estimable. 
 It is defined as the weighted average of the $i^{\text{th}}$ row of $Y$ and $X$ with weight proportional to a leave-one-out estimator of the inverse of $\mu_j^2:=\E x_{ij}^2$.

 Let $$W_{ij}: =x_{ij}\varepsilon_{ij}+
 x_{ij}^2\theta_{ij}.$$

 \begin{theorem}\label{th4.2no}
 	Let  $g$ be  dense   in the sense of (\ref{eq4.1dense}), and assume $\E x_{ij} \varepsilon_{ij}=0 $. Suppose $W_{ij}$ is independent over $j$ and that $\E W_{ij}^4<C$, $\E x_{ij}^2>c>0$, and $\Var(W_{ij})>c>0$. In addition, suppose $\sqrt{p}\log p=o(n)$. Then 
 	$$
 	s_n^{-1} \sqrt{p}[ \widehat{h_i(\Theta)}-\theta_i'g]\to^d N(0,1)
 	$$
 	where $s_n^2= p \sum_{j=1}^pg_j^2 (\E x_{ij}^2)^{-2}\Var(W_{ij} ).$
 \end{theorem}

 \section{Application to Heterogeneous Treatment Effects}\label{sec:treatment}

 As an important illustration, we show how to apply our framework in a treatment effects setting. Suppose that, for each time $j=1,...,p$   and each unit $i=1,..., n$, there is a pair of potential outcomes 
 \begin{equation}\label{eq: potential outcomes}
 	Y_{ij}(m) =h_{j,m}(\eta_i) +e_{ij}(m),\quad m\in\{0,1\}.
 \end{equation}
 Here $m$ is denotes treatment $(m=1)$ or control ($m=0$) state. In any time period $j$ and for any unit $i$, we observe either $Y_{ij}(1)$ or $Y_{ij}(0)$, but not both, depending on the unit's realized treatment state in that period. The treatment effect depends on time-varying functions $h_{j,m}(.)$ of unit specific state variable $\eta_i$; both $h_{j, m}(\cdot)$ and $\eta_i$ may be unobservable and random. {For clarity, we focus on the scenario where the goal is to perform statistical inference on a long-run treatment effect for a given unit $i$: 
 	$$
 	\tau_i:= \frac{1}{p}\sum_{j=1}^p\nu_{ij}
 	$$
 	where $\nu_{ij}= h_{j,1}(\eta_i)- h_{j,0}(\eta_i)$ is the treatment effect for unit $i$ at time $j$.}
 

 Define the treatment status indicator 
 $$ x_{ij}(m)=1\{\text{unit $i$ at period $j$ is in state $m$} \}= 1\{Y_{ij}(m) \text{ is observable} \}.
 $$
 Consider the following treatment scenario. Suppose the entire time span $\{1,2,...,p\}$ is divided into two periods,
 $$
 T_0=\{1,...,p_0\} \ \text{and} \ T_1=\{ p_0+1,...,p\},
 $$
 where both $p_0$ and $p_1:=p-p_0$ are large and both periods are known. 
 We assume 
 \begin{eqnarray}\label{eq5.2}
 	\begin{split}
 		\max_{i\leq n}\sum 1\{j\in T_0:x_{ij}(0)=0\} &=& o_P\left(d_{n,p_0}\right),\cr
 		\max_{i\leq n}\sum 1\{j\in T_1:x_{ij}(1)=0\} &= & o_P\left( d_{n, p_1}\right),
 	\end{split}
 \end{eqnarray}
 where $d_{n,p_0}$ and $d_{n,p_1}$ are slowly growing sequences defined in (\ref{eq4.2daerv}). That is, each unit is in the control state during most periods in $T_0$, and each unit is the treatment state during most periods in $T_1$. We thus refer to $T_0$ and $T_1$ respectively as the ``control period'' and the ``treatment period''.
 We refer to this treatment scenario as ``systematic treatment,'' and note that treatment assignments are strongly dependent in this setting, which results in an important difference from much of the literature on inference in matrix completion settings. 
 In terms of our formal conditions, this scenario corresponds to the case of Assumption \ref{ass4.3} (ii).c.

 \subsection{Treatment effect inference}
 Let $ \theta_{ij}(m):=h_{j,m}(\eta_i).$ 
 We can then rewrite the model for potential outcomes \eqref{eq: potential outcomes} as 
 \begin{eqnarray}
 	y_{ij}(0) &=& \theta_{ij}(0) x_{ij}(0) + \varepsilon_{ij}(0),\quad j\in T_0 \label{eq5.120} \\
 	y_{ij}(1) &=& \theta_{ij}(1) x_{ij}(1) + \varepsilon_{ij}(1),\quad j\in T_1 \label{eq5.121} 
 \end{eqnarray}
 where $y_{ij}(m) = Y_{ij}(m) x_{ij}(m)$, and $\varepsilon_{ij}(m)=e_{ij}(m)x_{ij}(m).$ Let $\Theta(m)$ denote the $n\times p$ matrix of $(\theta_{ij}(m))_{n\times p}$. As, e.g., previously note by \cite{athey2018matrix}, it is then clear that recovering elements of $\Theta(m)$ is equivalent to solving a matrix completion problem. 
 
 In Section \ref{sec:low:trea} we provide sufficient conditions to establish that $\Theta(m)$ is an approximate low-rank matrix that satisfies both the SSV and incoherence conditions. Under these conditions, we can then estimate treatment effects by simply applying Algorithm \ref{alg3.1} twice -- once using the data from period $T_0$ and once using the data from period $T_1$. 
 
 \textbf{Step 1: } Apply Algorithm \ref{alg3.1} to (\ref{eq5.120}) to estimate $\Theta(0)$.

 \textbf{Step 2: } Apply Algorithm \ref{alg3.1} to (\ref{eq5.121}) to estimate $\Theta(1)$.

 \textbf{Step 3: } Make inference on the treatment effects from the estimated $\Theta(1)-\Theta(0)$. 
 
 Let $\widehat\theta_{ij}(m)$ denote the $(i,j)$ element of the estimated matrix $\Theta(m)$. The average treatment effect estimator is then given by 
 $$
 \widehat\tau_i:= \frac{1}{p_1}\sum_{j\in T_1} \widehat\theta_{ij}(1) - \frac{1}{p_0}\sum_{j\in T_0} \widehat\theta_{ij}(0).
 $$
 It is straightforward to extend Theorem \ref{th4.1} to this context, which leads to the asymptotic distribution of the estimated treatment effects. Formal results are to be presented in Section \ref{sec:theory:trea}.

 \subsection{The low-rank approximation}\label{sec:low:trea}
 We show that the matrix formed from elements $h_{j, m}(\eta_i)$ can be approximated by a low-rank matrix with slowly growing rank. To aid in focusing on the main idea, we suppress the notation ``$m$'' throughout this section. 
 
 Consider a family of time-varying functions $h_{j}(\cdot)$ of subject-specific latent variables $\eta_i$. Let $\Theta$ be the $n \times p$ matrix obtained by setting the $(i,j)$ element of $\Theta$ to $h_j(\eta_i)$. Suppose $h_j(\cdot)$ has a sieve approximation:
 \begin{equation}\label{eq2.3}
 	h_j(\eta_i) = \sum_{k=1}^J\lambda_{j, k}\phi_k(\eta_i) + r_{ij}
 	=\lambda_j'\Phi_i+r_{ij}
 \end{equation}
 where $\Phi_i:=(\phi_1(\eta_i),...,\phi_J(\eta_i))'\in\mathbb R^J$ is a set of sieve transformations of $\eta_i$ using $\phi_k(\cdot)$ as the basis functions, $\lambda_j=(\lambda_{j,1},...,\lambda_{j,J})'$ is the vector of sieve coefficients for $h_j(\cdot)$, and $r_{ij}$ is the sieve approximation error. Write $\Phi$ as the $n\times J$ matrix of $\Phi_i$, $\Lambda$ as the $p\times J$ matrix of $\lambda_j$, and $R$ as the $n\times p$ matrix of $r_{ij}$. Then the matrix form of (\ref{eq2.3}) is 
 $$
 \Theta = \underbrace{\Phi\Lambda' }_{\Theta_0}+ R.
 $$
 Clearly, $\rank(\Theta_0)\leq J$, and there is a rotation matrix $H$ so that columns of $\Lambda H$ are the right singular-vectors of $\Theta_0$. The error $R$ is naturally present as the sieve approximation error which will decrease as more elements are considered in the sieve approximation. It is then natural to consider sequences where 
 $J$ increases slowly with $(n,p)$.
 
 We now illustrate how both the SSV and incoherence conditions can hold in this setting under sensible conditions on the functional space and the sieve bases. Suppose $h_j$ belongs to a H\"{o}lder class: For some $C, \beta,\alpha>0$,
 $$
 \{h: \max_{\gamma_1+...+\gamma_{d}=\beta}\left|\frac{\partial^\beta h(x)}{\partial x_1^{\gamma_1}...\partial x_{d}^{\gamma_{d}} }-\frac{\partial^\beta h(y)}{\partial y_1^{\gamma_1}...\partial y_{d}^{\gamma_{d}} }\right|\leq C\|x-y\|^\alpha, \text{for all }x,y\}.
 $$
 Further suppose that a common basis, such as polynomials or B-splines are considered. We will then have 
 $$
 \max_{ij}|r_{ij}|\leq C J^{-a},\quad a= (\beta+\alpha)/\dim(\eta_i),
 $$
 which can be made arbitrarily small for sufficiently smooth functions even if $J$ grows slowly.

 Now, suppose there exists a $b \geq 0$ such that $\psi_{J}(\Theta) \leq \psi_1(\Theta) \leq C J^b\psi_J(\Theta)$ for some $C>1$. It is then easy to show that the sequence $\psi_{np}$ can be taken as 
 $$
 \psi_{np}\asymp \sqrt{J^{-(2b+1)}\sum_{i=1}^n\sum_{j=1}^p h_j(\eta_i)^2}.
 $$
 We then have that the top $J$ singular values grow at this rate which leads to the SSV condition.
 
 Finally, write $S_\Lambda=\frac{1}{p} \Lambda'\Lambda$, $S_\Phi= \frac{1}{n}\Phi'\Phi$, and $A =S_{\Phi}^{1/2} S_{\Lambda} S_{\Phi}^{1/2}.$ Also let $G_\Phi$ be a $J\times J$ matrix whose columns are the eigenvectors of $A$, and let $T$ be the diagonal matrix of corresponding eigenvalues. Letting $H_\Phi:= S_\Phi^{-1/2} G_\Phi$, it can be verified that 
 $$
 \Theta_0\Theta_0' \Phi H_\Phi = pn \Phi H_\Phi T \ \text{and} \  \frac{1}{n} ( \Phi H_\Phi)' \Phi H_\Phi= I.
 $$ 
 Thus, the columns of $\frac{1}{\sqrt{n}} \Phi H_\Phi$ are the left singular-vectors of $\Theta_0$, and the eigenvalues of $ npA $ equal the first $J$ eigenvalues of $ \Theta_0'\Theta_0$. Similarly, we can define $ H_\Lambda= S_\Lambda^{-1/2} G_\Lambda$ where $G_\Lambda$ is a $J\times J$ matrix whose columns are the eigenvectors of $S_{\Lambda}^{1/2} S_{\Phi} S_{\Lambda}^{1/2}$.
 Hence, we have
 \begin{equation}\label{eqc.1}
 	U_0 =n^{-1/2} \Phi H_\Phi,\quad V_0= p^{-1/2}\Lambda H_\Lambda.
 \end{equation}
 Thus, 
 \begin{eqnarray}\label{eq5.3}\begin{split}
 		\max_{i\leq n} \|u_i\|&\leq&n^{-1/2} \max_{i\leq n} \|\Phi_i\|\psi_{\min}^{-1/2} (S_{\Phi})\cr
 		\max_{j\leq p} \|v_j\|&\leq& p^{-1/2} \max_{j\leq p} \|\lambda_j\|\psi_{\min}^{-1/2} (S_{\Lambda}).
 	\end{split}
 \end{eqnarray}
 
 It then follows that the incoherence condition holds as long as we can obtain proper upper bounds for $ \max_{j\leq p} \|\lambda_j\|$ and $ \max_{i\leq n} \|\Phi_i\|$. For example, 
 if $\{h_j(\cdot): j\leq p\}$ is further restricted to a Hilbert space with a uniform $L_2$- bound,
 $$
 \max_{j\leq p}\sum_{k=1}^{\infty}\lambda_{j, k}^2<\infty,
 $$
 then $ \max_{j\leq p} \|\lambda_j\|<C$. 
 
 We formalize the preceding discussion in the following assumption and lemma.

 \begin{assumption}\label{asstreat2}
 	(i) $\max_{j\leq J}\sup_{\eta}|\phi_j(\eta)|<C$, $\E\psi_{\min}^{-1}(S_{\Phi})<C$, and $\psi_{\min}^{-1}(S_{\Lambda})<C$. 
 	
 	(ii) The sieve approximation satisfies $$\max_{ij}|r_{ij}|\leq C J^{-a}$$ for some $a>0$.
 	
 	(iii) $\{h_{j}(\cdot): j\leq p\}$ belong to ball $\mathcal H(\mathcal U, \|\|_{L_2}, C)$ inside a Hilbert space spanned by the basis $\{\phi_k: k=1,...\}$
 	with a uniform $L_2$-bound $C$:
 	$$
 	\sup_{h\in \mathcal H(\mathcal U, \|\|_{L_2})} \|h\|\leq C,
 	$$
 	where $\mathcal U$ is the support of $\eta_i$.
 	
 \end{assumption}

 \begin{lemma}\label{lem6.1}  Suppose Assumption \ref{asstreat2} holds. Then
 	
 	(i) The minimum nonzero singular value $\psi_{np} $ for $\Theta_0=\Phi\Lambda'$ can be taken as
 	$$
 	\psi_{np}^2\asymp {J^{-(2b+1)}\sum_{i=1}^n\sum_{j=1}^p h_{j}(\eta_i)^2},\quad m=0,1,
 	$$
 	which means $\psi_J(\Theta_0)\geq c\psi_{np} $ for this choice of $\psi_{np}$.
 	
 	(ii)	The incoherence  Assumption \ref{ass:4.6} holds.
 	
 	(iii) The low-rank approximation error satisfies $\|R\|_{(n)}\leq C(p\vee n)^{3/2} J^{-a} .$
 \end{lemma}

 \subsection{Reproducing kernel representation}
 We now verify the eigengap condition: 	Let $A=  \frac{1}{pn}\Theta\Theta'.$ There are constants $b,d\geq0$ such that 
 \begin{eqnarray}\label{eq5.9}	
 	\begin{split}
 		\psi_1(A)/\psi_{J}(A)&\leq&  O_P(  J^{b})\cr
 		\min_{k=1...J-1}\psi_{k}(A)-	\psi_{k+1}(A)&\geq& c J^{-d}.
 	\end{split}
 \end{eqnarray}

 Below we verify the above conditions   when the treatment functions are generated from a Gaussian process.

 Suppose $\eta_i$ are uniformly generated from $[0,1]$, and functions $h_{j}(\cdot)$ are independently generated from a Gaussian process with covariance kernel 
 $$
 K(\eta_1,\eta_2) = \Cov(h_j(\eta_1), h_j(\eta_2)),
 $$
 where $K(\cdot,\cdot)$ is  a continuous positive semi-definite kernel function  supported on a compact set. In addition, suppose the associated integral operator 
 $$(Tf)(\cdot)=\int K(\cdot, \eta) f(\eta) d\eta$$ is positive semi-definite. Let $\{\bar\phi_k(\cdot)\}$ and $\nu_k\geq0$ be the eigenfunctions and eigenvalues of $T$.
 Then by Mercer's theorem, $\{\bar\phi_k(\cdot)\}$ is an orthonormal basis so that $K$ has the following representation:
 $$
 K(\eta_1,\eta_2)=\sum_{k=1}^{\infty} \nu_k\bar\phi_k(\eta_1)\bar\phi_k(\eta_2),
 $$
 where the infinite sum can be approximated arbitrarily well by finite truncation $J$ as $J\to\infty.$

 Now consider the $n\times n$ matrix $\frac{1}{p}\Theta\Theta'$, whose $(i,l)$ element is 
 $$
 \frac{1}{p} \sum_{j=1}^ph_j(\eta_i)h_j(\eta_l) = K(\eta_i,\eta_l) +o_P(1) =\bar\Phi_i' D_{\lambda} \bar\Phi_l +o_P(1)
 $$
 where $\bar\Phi_i'=(\bar\phi_1(\eta_i),...,\bar\phi_J(\eta_i))$ and $D_{\lambda}$ is a diagonal matrix of $(\nu_1,...,\nu_J)$. Also, because the $h_j$ are independently generated from the Gaussian process, the $o_P(1)$ terms are uniform over all elements.  
 Thus, we have an approximate low-rank representation of $\Theta\Theta'$:
 $$
 \Theta\Theta' =\left[\sum_{j}h_j(\eta_i)h_j(\eta_l)\right]_{n\times n}\approx  p  \bar\Phi D_{\lambda} \bar\Phi' .
 $$

 Because the columns of $\bar\Phi$ are formed from eigenfunctions, its columns are approximately orthonormal bases as eigenvectors of $\Theta\Theta'$. Hence the diagonals  of $D_{\lambda}$ are also approximately the top $J$ eigenvalues of $\frac{1}{np}\Theta\Theta'$. This observation heuristically shows that the top eigenvalues of $\frac{1}{np}\Theta\Theta'$ are approximately the same as those of the integral operator $T$ associated with the reproducing kernel function. 
 
 Rigorously, we can  verify this condition as follows.
 The conditions of the  lemma below are required to hold for both $m\in\{0,1\}$ in our treatment effect setting.
 \begin{lemma}\label{lem5.2} Suppose the eigenvalues of the integral operator $T$ satisfy
 	$$
 	\nu_k=M k^{-\alpha},\quad  k=1,2,...
 	$$
 	for some $M,\alpha>0$. Further, suppose $\sqrt{\frac{\log n}{p}}+r_J+ \frac{J}{\sqrt{n}}=o_P(J^{-\alpha-1})$, where we recall that $R$ is the remainder matrix in (\ref{eq5.8}) and $r_J:=\sup_{\eta_1,\eta_2}|\sum_{k>J}\nu_k\bar\phi_k(\eta_1)\bar \phi_k(\eta_2)|$. Then the eigengap condition  (\ref{eq5.9}) holds. Specifically, let $A=  \frac{1}{pn}\Theta\Theta'$,
 	\begin{eqnarray}	
 		\begin{split}
 			\psi_1(A)/\psi_{J}(A_m)&\leq&  O_P(  J^{\alpha})\cr
 			\min_{k=1...J-1}\psi_{k}(A_m)-	\psi_{k+1}(A)&\geq& c J^{-(\alpha+1)}.
 		\end{split}
 	\end{eqnarray}
 \end{lemma}

 \subsection{Inference for treatment effects under systematic assignment}\label{sec:theory:trea}
 
 Building on the previous subsections, suppose $h_{j, m}(\eta_i) $ has the following sieve representation: 
 $$
 h_{j, m}(\eta_i) = \sum_{k=1}^J\lambda_{j, k, m}\phi_k(\eta_i) + r_{ij}(m),\quad m=0,1.
 $$
 We then have that the matrix $\Theta(m):=(\theta_{ij}(m))_{n\times p_m}$ admits an approximate low-rank structure for each $m\in\{0,1\}$:
 \begin{equation}\label{eq5.8}
 	\Theta(m)= \Theta_0(m)+ R(m),\quad \Theta_0(m)=\Phi \Lambda_m',\quad R(m)=(r_{ij}(m))_{n\times p_m},
 \end{equation}
 where  $\Lambda_m$ is the $p\times J$ matrix of $\lambda_{j,k,m}$. 

 Note that $\widehat\tau_i$ estimating a sensible average treatment effect relies on an additional stability assumption.
 Define, for $m\in\{0,1\}$, 
 $$
 \zeta_{ij}(m):= x_{ij}(m) v_j(m)'\bar B(m)^{-1} \frac{1}{p_m}\sum_{j\in T_m} v_j(m)
 $$ where $\bar B(m)= \sum_{j\in T_m}x_{ij}(m) v_j(m) v_j(m)'. $
 Applying the analysis of Theorem \ref{th4.1}, we have
 \begin{align*}
 	\widehat\tau_i &- \tau_i = \sum_{j\in T_1} e_{ij} \zeta_{ij}(1) - \sum_{j\in T_0} e_{ij} \zeta_{ij}(0)+ 
 	o_P(\min\{p_0, p_1\}^{-1/2}) \\
 	&+
 	\left( \frac{1}{p_1}\sum_{j\in T_1} \theta_{ij}(1) - \frac{1}{p}\sum_{j=1}^p \theta_{ij}(1)\right)
 	-\left( \frac{1}{p_0}\sum_{j\in T_0} \theta_{ij}(0) - \frac{1}{p}\sum_{j=1}^p \theta_{ij}(0)\right) .
 \end{align*}
 This expansion yields the asymptotic distribution of $\widehat\tau_i$ under the condition that the second line on the right-hand-side is bounded by $o_P(\min\{p_0, p_1\}^{-1/2})$. That is, we need stability of treatment and control averages in the sense that the average of $\theta_{ij}(0)$ and $\theta_{ij}(1)$ obtained over the respective subsamples does not deviate too far from the infeasible average that would be obtained looking over the entire sample period.

 \begin{theorem}\label{th5.2} 
 	Suppose Assumptions  \ref{ass4.3}, \ref{ass.5mom}, \ref{ass4.7} hold. Suppose Assumption \ref{asstreat2} holds for $h_{j,0}$ and $h_{j,1}$. In addition, suppose 
 	\begin{align*}
 		\frac{1}{p_1}\sum_{j\in T_1} \theta_{ij}(1) - \frac{1}{p}\sum_{j=1}^p \theta_{ij}(1) &= o_P(\min\{p_0, p_1\}^{-1/2}) \ \text{and} \\
 		\frac{1}{p_0}\sum_{j\in T_0} \theta_{ij}(0) - \frac{1}{p}\sum_{j=1}^p \theta_{ij}(0) &= o_P(\min\{p_0, p_1\}^{-1/2}).
 	\end{align*}
 	Let
 	\begin{eqnarray*}
 		\bar s_{np,i}^2&:=& \sum_{j\in T_0} \Var (e_{ij}|X,\eta) \zeta_{ij}(0)^2 + \sum_{j\in T_1} \Var (e_{ij}|X,\eta) \zeta_{ij}(1)^2 . \end{eqnarray*}
 	Suppose there is a constant $c>0$ so that $\bar s_{np,i}^2\min\{p_0, p_1\}>c$ with probability approaching one.
 	Then as $n, p_0, p_1\to\infty,$
 	$$
 	\frac{ \widehat\tau_i- \tau_i}{ \bar s_{np,i}} \to^d N(0,1).
 	$$
 \end{theorem}

 \section{Simulations}\label{sec:sim}

 We now illustrate the performance of our inferential approach through a small simulation study in the systematic treatment assignment setting. We report results for $n=p=400$.
 
 To generate data, we first divide the period of observation $\{1,...,p\}$ equally into two periods $T_0$ and $T_1$ each consisting of $p_m = p/2$ observation times. 
 To generate $x_{ij}(m)$, we generate $n_i$ integers $j_1...j_{n_i}$ without replacement to form a set $ A_i(m)=\{j_1,..., j_{n_i}\} \subset T_m$. The number $n_i\leq N_0$ is uniformly generated to be less than a predetermined number $N_0 \in \{p_m^{1/2},p_m^{1/3},p_m^{1/4}\}$. We then set
 $$
 x_{ij}(m)= \begin{cases}
 	0& \text{ if } j\in A_i(m)\\
 	1& \text{ if } j\notin A_i(m)
 \end{cases}.
 $$
 Hence, for each unit $i$, $x_{ij}(m)=1$, \textit{with up to $N_0$ exceptions}, throughout period $T_m$ whose total length is $p_m$. In addition, we generate the noise  $\varepsilon_{ij}$  independently across both $(i,j)$ and $\varepsilon_{ij}(m)\sim \mathcal N(0,\sigma_{e}^2)$ for $\sigma_{e}=1$. 
 
 One of the key conditions in this scenario is that the treatment effect should be stable in the sense that $\frac{1}{p } \sum_{j=1}^p\theta_{ij}(m)$  can be well approximated by $\frac{1}{p_m} \sum_{j\in T_m}\theta_{ij}(m)$. We thus consider the simplest possible setting where this condition holds by generating time invariant treatment functions: 
 $$
 h_{0}(\eta_i) = \sum_{k=1}^{\infty} \frac{|W_{k}|}{k^a}\sin(k\eta_i),\quad h_{1}(\eta_i) = \sum_{k=1}^{\infty} \frac{(|W_{k}|+2)}{k^a}\sin(k\eta_i).
 $$
 Here  $\eta_i\sim $ Uniform$[-1,1]$, $W_{k}\sim \mathcal N(0,1)$, and the noise is $e_{ij}\sim \mathcal N(0,1)$. 
 The power parameter $a>1$ quantifies the decay speed of the sieve coefficients.

 In terms of implementation of our procedure, we also need $J$ and $\nu$. We do not attempt to infer the rank $J$ from the data. Rather, we look at estimates based on four pre-specified values of the rank: $J=1,...,4$. We set the parameter $\nu$ for the nuclear-norm penalized optimization through a simple plug-in procedure. Specifically, we set 
 \begin{align}\label{eq: nu}
 	\nu=2.2\bar Q(\|Z\circ  X (m)\|; 0.95)
 \end{align}
 where $\bar Q(W ; q)$ denotes the $q^{\textnormal{th}}$ quantile of a random variable $W$ and $Z$ is an $n\times p_m$ matrix whose elements $z_{ij}$ are generated as $\mathcal N(0,\widehat\sigma_{e}^2)$ independent across $(i,j)$ for some estimated $\widehat\sigma_{e}^2$.\footnote{We set $\widehat\sigma^2_{e}$ by obtaining an initial guess, $\tilde\sigma^2_{e}$, from estimating the simple model 
 	$y_{ij} =  x_{ij}  \theta_i  +\sigma_{e}^{-1}u_{ij}$ where $\Var(u_{ij})=1$. We then obtain an initial solution to the nuclear-norm regularized optimization problem with tuning parameter set as in \eqref{eq: nu} with $z_{ij} \sim N(0,\tilde\sigma_{e}^2)$. Letting $\widetilde\theta_{ij}$ denote the nuclear-norm regularized estimator obtained with this initial tuning. We then set $\widehat\sigma^2_{e}=\frac{1}{np}\sum_{ij} \widetilde \varepsilon_{ij}^2,$ where $\widetilde \varepsilon_{ij}=y_{ij}-x_{ij}\widetilde\theta_{ij}$.} This choice can be motivated as in \cite{belloni2013least} and \cite{chernozhukov2018inference}.
 

 We report simulation coverage probabilities of 95\% confidence intervals for $\tau_1$ formed using estimated standard errors based on 1000 simulation replications in Table \ref{tab: coverage}. Overall, the derived asymptotic distributions seem to provide reasonable approximations to the finite sample distributions under our simulation settings, and the good performance appears quite robust to the choice of $J$ in this simulation.

 \begin{table}[t]
 	
 	\begin{center}
 		\caption{\small  Systematic Assignments. Coverage Probabilities of the treatment effect $\tau_i$.   }
 		\label{tab: coverage}
 		\begin{tabular}{cc|cccccccc}
 			\hline
 			\hline
 			$N_0$&power 	$a$		 &     \multicolumn{8}{c}{$J $} \\
 			&& & 1 & & 2 & & 3 & & 4   \\
 			\hline
 			&& & & & & & &   &   \\
 			$p_m^{1/2}$ &4 & & 0.952  &   &  0.950  & &  0.949  & &  0.948   \\ 
 			&3 &  &   0.947&  & 0.943& &  0.943&  &  0.942   \\ 
 			&2 &  &    0.952&  &   0.950& &    0.948&  &   0.949  \\    
 			$p_m^{1/3}$ &4 &  &    0.950 & &  0.950&  &  0.947 &  &  0.945  \\      
 			&3 &  & 0.948 &  &  0.946& &   0.945&  &  0.943    \\   
 			&2 &  &  0.952 &  &  0.950& &   0.948& &  0.947 \\  
 			$p_m^{1/4}$ &4 &&   0.954 & &  0.952 & &   0.949 &&  0.946   \\  
 			&3 &  &  0.954 &  &  0.952& &   0.951& &  0.950\\  
 			&2&  &  0.955 &  &  0.956& &   0.950& &  0.951\\ 
 			
 			\hline
 			\multicolumn{10}{p{9cm}}{\footnotesize Note: This table reports the simulated coverage probability of 95\% confidence intervals. The rank $J$ equals the sieve dimension used. Power $a$ quantifies  the decay rate of the sieve coefficients $\lambda_{j,k}\sim k^{-a}$. Finally, $N_0$ controls the number of ``exceptions" over time (the maximum number of treated during ``control period" and the maximum number of controlled during ``treatment period".)}
 		\end{tabular}
 	\end{center}
 \end{table}%

 

\newpage

  \bibliographystyle{ims}
\bibliography{liaoBib_newest}

\end{document}